\theoremstyle{plain}
\newtheorem{theorem}{Theorem}
\numberwithin{equation}{section}
\begin{document}

\title {Elliptic functions from hypergeometric integrals}

\date{}

\author[P.L. Robinson]{P.L. Robinson}

\address{Department of Mathematics \\ University of Florida \\ Gainesville FL 32611  USA }

\email[]{paulr@ufl.edu}

\subjclass{} \keywords{}

\begin{abstract}

As a contribution to the Ramanujan theory of elliptic functions to alternative bases, Li-Chien Shen has shown how analogues of the Jacobian elliptic functions may be derived from incomplete hypergeometric integrals in signatures three and four. We determine precisely the signatures in which the Jacobian analogues or their squares are indeed elliptic.  

\end{abstract}

\maketitle

\medbreak

\section*{Introduction}

\medbreak 

For a choice $0 < \kappa < 1$ of modulus, write 
$$u = \int_0^{\phi} \frac{{\rm d} \theta}{\sqrt{1 - \kappa^2 \sin^2 \theta}} $$
and invert the assignment $\phi \mapsto u$ (near the fixed origin) to obtain $u \mapsto \phi$; in these terms, the Jacobian elliptic functions appear as 
$${\rm sn} \, u = \sin \phi$$
$${\rm cn} \, u = \cos \phi$$ 
and 
$${\rm dn} \, u = \sqrt{1 - \kappa^2 \sin^2 \phi}.$$

\bigbreak 

Li-Chien Shen [5][7] rewrites the integrand in terms of the `classical' hypergeometric function 
$$\frac{1}{\sqrt{1 - z}} = F(\tfrac{1}{2}, \tfrac{1}{2}; \tfrac{1}{2}; z)$$
thus 
$$u = \int_0^{\phi} F(\tfrac{1}{2}, \tfrac{1}{2}; \tfrac{1}{2}; \kappa^2 \sin^2 \theta) \, {\rm d} \theta$$
and asks whether the replacement of this `classical' hypergeometric function by other hypergeometric functions similarly gives rise to elliptic functions. In [5] this is done (with an important modification) for $F(\frac{1}{3}, \frac{2}{3}; \frac{1}{2}; \bullet)$  and in [7] it is done for $F(\frac{1}{4}, \frac{3}{4}; \frac{1}{2}; \bullet)$. These two cases are revisited in [2] and [3] respectively, from a slightly different perspective. [4] looks at the case $F(\frac{1}{6}, \frac{5}{6}; \frac{1}{2}; \bullet)$ following [6] on $F(\frac{1}{6}, \frac{5}{6}; 1; \bullet)$. Of course, all of this has a direct bearing on the Ramanujan theory of elliptic functions to alternative bases in signatures 3, 4 and 6. 

\medbreak 

Our aim in this paper is to follow the program of Shen when the `classical' hypergeometric function is replaced by 
$$F(\tfrac{1}{2} - a, \tfrac{1}{2} + a; \tfrac{1}{2}; \bullet)$$
where $a$ is the reciprocal of a positive integer. The class in which $a$ is the reciprocal of an even integer is handled in a uniform manner; the results in signature three ($a = 1/6$) and signature four ($a = 1/4$) fall out as special cases. The class in which $a$ is the reciprocal of an odd integer also receives a uniform treatment, signature six falling out as the special case $a = 1/3$. Our analysis completely determines when the resulting analogues of ${\rm sn}, {\rm cn}$ and ${\rm dn}$ (or their squares) are elliptic. 
\medbreak 

\section*{Preliminaries} 

\medbreak 

In the next two sections, we focus on the cases in which $a$ is the reciprocal of a positive integer; here, we place no such restriction on $a$.

\medbreak 

It will simplify matters to employ the abbreviation 
$$F_a (z) = F(\tfrac{1}{2} - a, \tfrac{1}{2} + a; \tfrac{1}{2}; z)$$
for the hypergeometric function from which our integrand is constructed. For this function, we quote the standard identity 
$$F_a (\sin^2 z) = \frac{\cos 2 a z}{\cos z}.$$

\medbreak 

Now, let us fix the modulus $\kappa \in (0, 1)$ with complementary modulus $\lambda = \sqrt{1 - \kappa^2}$. Speaking with some notational  informality, the rule 
$$u = \int_0^{\phi} F_a(\kappa^2 \sin^2 \theta) \, {\rm d} \theta$$
defines an association having the property 
$$\frac{{\rm d} u}{{\rm d} \phi} = F_a(\kappa^2 \sin^2 \phi);$$
as $F_a (0) = 1$ it follows that the assignment $\phi \mapsto u$ inverts near $0$ (as a fixed point) to yield $u \mapsto \phi$. We now drop the informality and write $\phi$ for the (analytic) local inverse function thus obtained: if $t$ is near $0$ then 
$$t = \int_0^{\phi(t)} F_a(\kappa^2 \sin^2 \theta) \, {\rm d} \theta.$$ 
Near $0$ as a fixed point, a function $\psi$ is defined by the rule 
$$\sin \psi = \kappa \, \sin \phi$$
and continuity (which is automatically promoted to analyticity). 

\medbreak 

\begin{theorem} \label{phi}
The derivative of the local inverse $\phi$ is given by 
$$\phi\,' = \frac{\cos \psi}{\cos 2 a \psi}.$$
\end{theorem} 

\begin{proof} 
An application of the chain rule to the inverse function: 
$$\phi \,' = \frac{1}{F_a(\kappa^2 \sin^2 \phi)} = \frac{1}{F_a(\sin^2 \psi)} =  \frac{\cos \psi}{\cos 2 a \psi}$$
in view of the quoted hypergeometric identity. 
\end{proof} 

\medbreak 

Prompted by this result, we introduce two abbreviations that will further simplify subsequent results: we write 
$$d = \cos \psi$$ 
and (as a `partial $d$') 
$$\partial = \cos \, (2 a \psi)$$
so that Theorem \ref{phi} reads 
$$\phi \,' = d/\partial.$$ 

\medbreak 

\begin{theorem} \label{d and}
The functions $d$ and $\partial$ satisfy the following first-order differential equations:  
$$\partial^2 (d\,')^2 = (1 - d^2)(d^2 - \lambda^2)$$
$$\partial^2 (\partial \,')^2 = 4 a^2 (1 - \partial^2) (d^2 - \lambda^2).$$
\end{theorem}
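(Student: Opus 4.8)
The plan is to derive both identities from a single computation — a clean formula for $\psi'$ — followed by routine manipulation of Pythagorean identities. The starting point is the defining relation $\sin \psi = \kappa \sin \phi$: differentiating it gives $\cos \psi \cdot \psi' = \kappa \cos \phi \cdot \phi'$, and since $\cos \psi = d$ while Theorem \ref{phi} supplies $\phi' = d/\partial$, the factor $d$ cancels and I obtain
$$\psi' = \frac{\kappa \cos \phi}{\partial}.$$
This cancellation is the one point where anything needs to be noticed; everything afterward is bookkeeping.

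For the first equation, I would differentiate $d = \cos \psi$ to get $d' = -\sin \psi \cdot \psi' = -\kappa \sin \psi \cos \phi / \partial$, so that $\partial \, d' = -\kappa \sin \psi \cos \phi = -\kappa^2 \sin \phi \cos \phi$ after using $\sin \psi = \kappa \sin \phi$ once more. Squaring gives $\partial^2 (d')^2 = (\kappa^2 \sin^2 \phi)(\kappa^2 \cos^2 \phi)$. Now $\kappa^2 \sin^2 \phi = \sin^2 \psi = 1 - d^2$, while $\kappa^2 \cos^2 \phi = \kappa^2 - \kappa^2 \sin^2 \phi = \kappa^2 - \sin^2 \psi = (\kappa^2 - 1) + \cos^2 \psi = d^2 - \lambda^2$, using $\lambda^2 = 1 - \kappa^2$. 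Substituting yields $\partial^2 (d')^2 = (1 - d^2)(d^2 - \lambda^2)$.

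For the second equation the same recipe runs verbatim: differentiating $\partial = \cos(2a\psi)$ gives $\partial' = -2a \sin(2a\psi) \cdot \psi' = -2a\kappa \sin(2a\psi) \cos \phi / \partial$, hence $\partial \, \partial' = -2a\kappa \sin(2a\psi) \cos \phi$. Squaring and invoking $\sin^2(2a\psi) = 1 - \cos^2(2a\psi) = 1 - \partial^2$ together with the identity $\kappa^2 \cos^2 \phi = d^2 - \lambda^2$ already established, I get $\partial^2 (\partial')^2 = 4a^2 (1 - \partial^2)(d^2 - \lambda^2)$.

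I do not expect a real obstacle: the only step that is not mechanical is the simplification of $\psi'$, where the coincidence that $\cos \psi$ equals $d$ — the same quantity appearing in the numerator of $\phi' = d/\partial$ — produces the cancellation that drives the whole argument. The remainder is just careful use of $\sin^2 \psi + \cos^2 \psi = 1$, the analogous identity for $2a\psi$, and $\lambda^2 = 1 - \kappa^2$.
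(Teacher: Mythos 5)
Your proposal is correct and follows essentially the same route as the paper: both derive $\psi' = \kappa\cos\phi/\partial$ from $\sin\psi = \kappa\sin\phi$ and Theorem \ref{phi}, then differentiate $d = \cos\psi$ and $\partial = \cos 2a\psi$, square, and finish with $\sin^2\psi + \cos^2\psi = 1 = \kappa^2 + \lambda^2$. The only cosmetic difference is your extra substitution $\kappa\sin\psi\cos\phi = \kappa^2\sin\phi\cos\phi$ before squaring, which changes nothing of substance.
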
 

\begin{proof} 
From $\sin \psi = \kappa \sin \phi$ it follows that 
$$\psi \,' = (\kappa \cos \phi) \, \phi\,'/ \cos \psi$$ 
whence Theorem \ref{phi} yields 
$$\psi\,' = \kappa \cos \phi\, / \cos 2 a \psi.$$
From $d = \cos \psi$ it follows that 
$$d\,' = - (\sin \psi) \psi\,' = -( \sin \psi) (\kappa \cos \phi)/\cos 2 a \psi$$
whence squaring yields 
$$\partial^2 (d\,')^2 = (1 - \cos^2 \psi)(\kappa^2 - \kappa^2 \sin^2 \phi) = (1 - d^2)(d^2 - \lambda^2)$$
since $\cos^2 \psi + \sin^2 \psi = 1 = \kappa^2 + \lambda^2$. This establishes the first differential equation. The second may be established similarly, using the fact that 
$$\partial \,' = - (2 a \sin 2 a \psi) \psi \,' = - 2 a (\sin 2 a \psi) \, (\kappa \cos \phi) / \cos 2 a \psi.$$ 
\end{proof} 

\medbreak 

Alongside the analogue 
$$d = \cos \psi$$ 
of the Jacobian function ${\rm dn}$, we shall be concerned with the analogue 
$$c = \cos \phi$$ 
of the `modular cosine' ${\rm cn}$ and the analogue 
$$s = \sin \phi$$ 
of the `modular sine' ${\rm sn}$. From their definitions, these functions satisfy the same quadratic relations as those satisfied by the classical Jacobian elliptic functions: thus, 
$$c^2 + s^2 = 1 \; \; {\rm and} \; \; d^2 + \kappa^2 s^2 = 1.$$ 
These analogues of the Jacobi functions are (with $\phi$ and $\psi$) initially defined and analytic in a neighbourhood of $0$. 

\medbreak 

When $a$ is the reciprocal of a positive integer, we ask whether these functions (or their squares) extend to the plane as elliptic functions. The answers are presented in the next two sections, treating first the case in which the integer $1/a$ is even and then the case in which $1/a$ is odd. The auxiliary function $\partial = \cos 2 a \psi$ plays an important r\^ole in our analysis. 

\medbreak 

\section*{The even case}

\medbreak 

Throughout this section, we consider the case in which $a = 1/2n$ is the reciprocal of an even positive integer. In this case, the auxiliary function assumes the form $\partial = \cos \tfrac{1}{n} \psi$. 

\medbreak 

Now, recall the Chebyshev polynomial (of the first kind) $T_n$ given by 
$$T_n (\cos \theta) = \cos n \theta.$$ 

\medbreak 

\begin{theorem} \label{Tn}
The functions $d$ and $\partial$ satisfy $d = T_n \circ \partial$. 
\end{theorem}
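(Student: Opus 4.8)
The plan is to read the claim off directly from the defining property of the Chebyshev polynomial, once the present form of the auxiliary function $\partial$ is substituted. Since $a = 1/(2n)$ we have $2a = 1/n$, so $\partial = \cos(2a\psi) = \cos(\psi/n)$, while $d = \cos\psi$ by definition. Thus the asserted identity $d = T_n \circ \partial$ is precisely the statement $\cos\psi = T_n(\cos(\psi/n))$.

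Next I would invoke the defining identity $T_n(\cos\theta) = \cos(n\theta)$ with $\theta = \psi/n$, which yields $T_n(\cos(\psi/n)) = \cos\big(n\cdot(\psi/n)\big) = \cos\psi = d$. The one point worth a remark is that here $\psi$ is an analytic function on a neighbourhood of $0$ rather than a real angle; but $T_n$ is a polynomial and $\cos$ is entire, so the identity $T_n(\cos w) = \cos(nw)$, valid for real $w$, persists for all complex $w$ by analytic continuation, and in particular may be applied with $w = \psi(t)/n$ for $t$ near $0$. Hence $T_n \circ \partial$ and $d$ coincide as analytic functions on a neighbourhood of $0$.

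I do not anticipate a genuine obstacle: the entire content is the observation that, when $1/a = 2n$, the auxiliary function $\partial$ is exactly the cosine of the $n$-th part of the angle whose cosine is $d$, so that $T_n$ interpolates between them. The only care needed is the (routine) promotion of the trigonometric Chebyshev identity to complex arguments so that it may legitimately be composed with the analytic function $\psi$.
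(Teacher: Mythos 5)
Your proof is correct and is essentially the paper's own argument: both substitute $\partial = \cos(\psi/n)$ (from $2a = 1/n$) and apply the defining identity $T_n(\cos\theta) = \cos n\theta$ with $\theta = \psi/n$ to get $T_n(\partial) = \cos\psi = d$. Your added remark on extending the Chebyshev identity to complex arguments by analytic continuation is a harmless (and correct) elaboration of what the paper leaves implicit.
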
 

\begin{proof} 
Direct from the definitions: abusing the composition, 
$$T_n (\partial) = T_n(\cos \tfrac{1}{n} \psi) = \cos \psi = d.$$
\end{proof} 

\medbreak 

Theorem \ref{d and} presents a pair of differential equations, each of which involves both $d$ and $\partial$. In the present `even' case, we now have a differential equation for $\partial$ alone. 

\medbreak 

\begin{theorem} \label{partial} 
When $1/a = 2n$, the auxiliary function $\partial$ satisfies the differential equation 
$$\partial^2 (\partial \,')^2 = \frac{1}{n^2} (1 - \partial^2) (T_n(\partial)^2 - \lambda^2).$$
\end{theorem}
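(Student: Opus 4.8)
The plan is to derive the stated differential equation directly from the second equation of Theorem \ref{d and} by substituting the relation $d = T_n \circ \partial$ furnished by Theorem \ref{Tn}, together with the specialization $a = 1/2n$.

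Concretely, I would start from the second identity of Theorem \ref{d and}, namely
$$\partial^2 (\partial\,')^2 = 4 a^2 (1 - \partial^2)(d^2 - \lambda^2).$$
Now set $a = 1/2n$, so that $4 a^2 = 1/n^2$. This converts the prefactor $4 a^2$ into $1/n^2$ and leaves
$$\partial^2 (\partial\,')^2 = \frac{1}{n^2} (1 - \partial^2)(d^2 - \lambda^2).$$
Finally, by Theorem \ref{Tn} we have $d = T_n \circ \partial$, so $d^2 = T_n(\partial)^2$, and substituting this into the right-hand side yields exactly the asserted equation. There is really nothing more to it: the result is an immediate corollary of the two preceding theorems once the value of $a$ is inserted.

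Since the argument is a one-line substitution, there is no serious obstacle; the only thing to be careful about is bookkeeping — confirming that $4a^2 = 1/n^2$ when $a = 1/2n$, and that the composition abuse $d = T_n(\partial)$ is read in the pointwise sense (i.e. $d(t) = T_n(\partial(t))$ for $t$ near $0$), so that squaring and substitution are legitimate. One might add a remark that, because all functions involved are analytic near $0$, the identity of analytic functions established on this neighbourhood is all that is needed at this stage; questions of global extension are deferred to the subsequent analysis.
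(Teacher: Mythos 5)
Your proposal is correct and is exactly the paper's own argument: substitute $d = T_n(\partial)$ from Theorem \ref{Tn} into the second equation of Theorem \ref{d and} and use $4a^2 = 1/n^2$ when $a = 1/2n$. Nothing further is needed.
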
 

\begin{proof} 
Merely insert Theorem \ref{Tn} in the second equation of Theorem \ref{d and} when $a = 1/2n$. 
\end{proof} 

\medbreak 

In addition to asking whether the functions $s, c$ and $d$ admit elliptic extensions to the plane, we may ask the same question of the auxiliary function $\partial$. The answer to this auxiliary question is negative, as an immediate corollary to the following theorem. 

\medbreak 

\begin{theorem} \label{p not e}
Each solution to the differential equation 
$$f^2 (f \,')^2 = \frac{1}{n^2} (1 - f^2) (T_n(f)^2 - \lambda^2)$$ 
is zero-free. 
\end{theorem}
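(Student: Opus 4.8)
The plan is a very short proof by contradiction: assume a solution vanishes somewhere, and simply evaluate both sides of the differential equation at that point. So suppose $f$ solves
$$f^2 (f\,')^2 = \tfrac{1}{n^2}(1 - f^2)(T_n(f)^2 - \lambda^2)$$
and that $f(t_0) = 0$ for some $t_0$ in its domain. Being a solution of a first-order equation, $f$ is in particular differentiable, so $f\,'(t_0)$ exists and is finite; hence the left-hand side, $f(t_0)^2\, f\,'(t_0)^2$, equals $0$.

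Next I would examine the right-hand side at $t_0$. Since $f(t_0) = 0$, the factor $1 - f^2$ takes the value $1$ there, so the right-hand side reduces to $\tfrac{1}{n^2}\,(T_n(0)^2 - \lambda^2)$, and the whole argument comes down to checking that this number is not zero. For this I invoke the defining relation $T_n(\cos\theta) = \cos n\theta$ at $\theta = \pi/2$, which gives $T_n(0) = \cos(n\pi/2)$; this is $0$ when $n$ is odd and $\pm 1$ when $n$ is even, so in every case $T_n(0)^2 \in \{0,1\}$. Because $0 < \kappa < 1$ forces $\lambda^2 = 1 - \kappa^2 \in (0,1)$, the difference $T_n(0)^2 - \lambda^2$ equals either $-\lambda^2$ or $\kappa^2$, and in either case it is nonzero. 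Thus the right-hand side is nonzero at $t_0$ while the left-hand side is zero, which is the desired contradiction; therefore $f$ is zero-free.

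There is essentially no analytic difficulty here, and I do not expect any genuine obstacle: the only thing that needs care is the elementary evaluation of $T_n$ at $0$ (equivalently, the observation that at a zero of $f$ the factor $1-f^2$ becomes $1$ and $T_n(f)^2$ becomes $0$ or $1$), after which the standing hypothesis $0 < \kappa < 1$ on the modulus does all the work. If one prefers to avoid even the mild worry about differentiability, the same conclusion follows for an analytic solution by comparing orders of vanishing at $t_0$: a zero of $f$ of order $k \ge 1$ makes the left-hand side vanish to order $4k - 2 \ge 2$, whereas the right-hand side is a nonvanishing factor near $t_0$ by the computation above — again a contradiction.
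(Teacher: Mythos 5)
Your proof is correct and is essentially the paper's own argument: evaluate the equation at a putative zero of $f$, note the left side vanishes while the right side equals $\tfrac{1}{n^2}\,(T_n(0)^2-\lambda^2)$ with $T_n(0)^2=\cos^2\tfrac{n\pi}{2}\in\{0,1\}$ and $\lambda^2\in(0,1)$, giving a contradiction. The extra remarks on differentiability and orders of vanishing are fine but add nothing beyond the paper's one-line evaluation.
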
 

\begin{proof} 
Evaluating both sides of the differential equation at a zero of $f$ would force $T_n(0)^2 = \lambda^2$. Alas: $T_n(0)^2 = \cos^2\frac{n \pi}{2} \in \{0, 1\}$ while $\lambda^2 \in (0, 1)$. 
\end{proof} 

\medbreak 

It follows at once that the auxiliary function $\partial$ does not extend to an elliptic function, for any (nonconstant) elliptic function must have zeros. 

\medbreak 

To address the elliptic extendibility of $s, c$ and $d$ we go a little further and consider the square of the auxiliary function, which we take the risk of denoting by 
$$\nabla = \partial^2.$$
From $\nabla \,' = 2 \, \partial \, \partial\,'$ and Theorem \ref{partial} we deduce that 
$$(\nabla\,')^2 = \frac{4}{n^2} (1 - \nabla) (T_n (\partial)^2 - \lambda^2).$$ 
Here, $T_n(\partial)^2$ is a degree $n$ polynomial in $\nabla$: say 
$$T_n (\partial)^2 = S_n (\nabla).$$ 
If $n$ is even then this is clear, for $T_n(\partial)$ itself is a polynomial in $\partial^2$. If $n$ is odd, then $T_n (\partial)$ is $\partial$ times a polynomial in $\partial^2$: explicitly, if $n = 2 m + 1$ then $T_n (\partial) = \partial \, V_m (2 \partial^2 - 1)$ where $V_m$ is Chebyshev of the third kind; see formula (1.15) in Section 1.2.4 of [1]. Note that if $n$ is even then $S_n (0) = T_n(0)^2 = 1$ while if $n$ is odd then $S_n(0) = 0$. 

\medbreak 

With this understanding, we have the following result. 

\medbreak 

\begin{theorem} \label{nabla} 
When $1/a = 2n$, the square $\nabla = \partial^2$ satisfies the differential equation 
$$(\nabla\,')^2 = \frac{4}{n^2} (1 - \nabla) (S_n (\nabla) - \lambda^2).$$ 
\end{theorem}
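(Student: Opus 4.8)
The plan is to derive the stated equation directly from Theorem \ref{partial} via the substitution $\nabla = \partial^2$, exactly as the paragraph preceding the statement already sketches; the only work remaining is to package that sketch cleanly and justify the one polynomial-identity fact that is being invoked.

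First I would recall $\nabla\,' = 2\partial\,\partial\,'$, so that $(\nabla\,')^2 = 4\,\partial^2(\partial\,')^2$. Feeding in Theorem \ref{partial}, which says $\partial^2(\partial\,')^2 = \tfrac{1}{n^2}(1-\partial^2)(T_n(\partial)^2 - \lambda^2)$, gives immediately
$$(\nabla\,')^2 = \frac{4}{n^2}(1-\partial^2)(T_n(\partial)^2 - \lambda^2) = \frac{4}{n^2}(1-\nabla)(T_n(\partial)^2 - \lambda^2),$$
so everything hinges on replacing $T_n(\partial)^2$ by a genuine polynomial $S_n(\nabla)$ in $\nabla = \partial^2$. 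Here I would simply invoke the discussion in the text: when $n$ is even, $T_n$ is itself an even polynomial, hence a polynomial in $\partial^2$, and squaring keeps it so; when $n = 2m+1$ is odd, the cited identity $T_n(\partial) = \partial\, V_m(2\partial^2 - 1)$ (formula (1.15) of [1]) shows $T_n(\partial)^2 = \partial^2\, V_m(2\partial^2-1)^2 = \nabla\, V_m(2\nabla - 1)^2$, again a polynomial in $\nabla$. In either case a degree count (the leading term of $T_n(\partial)^2$ is $(2^{n-1})^2\partial^{2n}$) shows $\deg S_n = n$, and substituting $\partial = 0$ recovers $S_n(0) = T_n(0)^2$, so $S_n(0) = 1$ when $n$ is even and $S_n(0) = 0$ when $n$ is odd. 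Substituting $T_n(\partial)^2 = S_n(\nabla)$ into the displayed equation yields the claim.

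There is essentially no obstacle: this is a bookkeeping corollary of Theorem \ref{partial} together with the elementary parity structure of the Chebyshev polynomials. The only point requiring a sentence of care is the legitimacy of writing $T_n(\partial)^2$ as a polynomial in $\nabla$ rather than merely a function of it — but this is exactly what the even/odd case split above establishes, and the odd case is handled by the explicit Chebyshev-of-the-third-kind identity quoted from [1]. I would therefore present the proof in three short lines: differentiate $\nabla = \partial^2$, substitute Theorem \ref{partial}, and invoke the definition $T_n(\partial)^2 = S_n(\nabla)$ already set up in the preceding paragraph.
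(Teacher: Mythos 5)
Your proposal is correct and follows exactly the route the paper takes: the paper's own ``proof'' is just the word ``Done'' because the derivation (substituting $\nabla\,' = 2\partial\,\partial\,'$ into Theorem \ref{partial} and invoking $T_n(\partial)^2 = S_n(\nabla)$, justified by the even/odd parity of $T_n$ and the identity $T_n(\partial) = \partial\, V_m(2\partial^2-1)$ from [1]) is carried out in the paragraph immediately preceding the theorem, which is precisely what you reproduce. Nothing is missing; your write-up simply packages the paper's surrounding discussion into the proof itself.
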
 

\begin{proof} 
Done. 
\end{proof} 

\medbreak 

We remark that if $f$ is a solution to the differential equation 
$$(f\,')^2 = \frac{4}{n^2} (1 - f) (S_n (f) - \lambda^2)$$ 
then each of its zeros is simple. Indeed, let $f(z) = 0$: evaluation of the differential equation at $z$ reveals that if $n$ is even then $f\,'(z)^2 = 4(1 - \lambda^2)/n^2$ while if $n$ is odd  then $f\,'(z)^2 = - 4 \lambda^2 / n^2$; in either case, $f\,'(z)$ is not zero. Of course, this is consistent with Theorem \ref{p not e}: were $\partial$ to have zeros, $\nabla = \partial^2$ would have even-order zeros. 

\medbreak 

To decide whether $\nabla$ admits extensions that are elliptic, we conduct a polar analysis. 

\medbreak 

\begin{theorem} \label{n3}
If $n > 3$ then a solution of the differential equation 
$$(f\,')^2 = \frac{4}{n^2} (1 - f) (S_n (f) - \lambda^2)$$ 
is elliptic only when constant.
\end{theorem}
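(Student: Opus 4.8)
The plan is to argue by contradiction: suppose $f$ is a nonconstant elliptic solution, and derive a restriction on $n$. The differential equation exhibits $(f\,')^2$ as a rational — in fact polynomial — function of $f$ of degree $n+1$, so the standard polar analysis of algebraic differential equations applies. First I would locate a pole of $f$, say at $z_0$, of some order $p \geq 1$. Near $z_0$ write $f(z) \sim A (z - z_0)^{-p}$; then $f\,'(z)^2 \sim A^2 p^2 (z-z_0)^{-2p-2}$, while the right-hand side $\tfrac{4}{n^2}(1-f)(S_n(f) - \lambda^2)$ has leading behaviour governed by the top-degree term of the polynomial $(1-f)(S_n(f)-\lambda^2)$, which has degree $n+1$ in $f$ and hence blows up like $(z-z_0)^{-(n+1)p}$. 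Balancing exponents forces $2p + 2 = (n+1)p$, i.e. $p(n-1) = 2$.

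From $p(n-1) = 2$ with $p$ a positive integer and $n \geq 2$, the only possibilities are $(p,n) = (2,2)$ and $(p,n) = (1,3)$. Since we are in the regime $n > 3$, both are excluded, so $f$ can have no poles. An elliptic function with no poles is constant, contradicting our assumption. That is the whole argument in outline; the remaining work is to make the leading-coefficient matching rigorous — one should check that the top coefficient of $S_n$ does not vanish (it is the leading coefficient of $T_n(\partial)^2$ re-expressed in $\nabla = \partial^2$, namely $(2^{n-1})^2 = 4^{n-1} \neq 0$ for the even-$n$ branch, and the square of the leading coefficient of $\partial V_m(2\partial^2-1)$ for odd $n$, again nonzero), so that the right-hand side genuinely has a pole of order exactly $(n+1)p$ at $z_0$ and no cancellation occurs in the exponent balance.

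I expect the main obstacle to be purely bookkeeping: confirming that the leading term really does have degree $n+1$ and nonzero coefficient in both parity cases for $n$, and dealing cleanly with the Laurent-series matching when $f$ might a priori have a pole whose leading coefficient is complex. Neither is deep — the degree of $S_n$ is stated in the excerpt as $n$, so $(1-f)(S_n(f)-\lambda^2)$ has degree $n+1$, and the leading coefficient is a nonzero constant times the leading coefficient of $S_n$, which is nonzero as noted above. One subtlety worth a sentence: the exponent-matching equation $2p+2 = (n+1)p$ is forced regardless of the value of the leading coefficient $A$ (which then gets determined, but its precise value is irrelevant here), so the diophantine conclusion $p(n-1)=2$ is robust. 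With $n > 3$ there is no solution, and the theorem follows.
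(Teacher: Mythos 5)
Your proposal is correct and follows essentially the same route as the paper: a polar analysis balancing the pole order of $(f\,')^2$ (namely $2p+2$) against that of $(1-f)(S_n(f)-\lambda^2)$ (namely $(n+1)p$, since $S_n$ has degree $n$), yielding $p(n-1)=2$ and hence excluding all $n>3$. Your extra check that the leading coefficient of $S_n$ is nonzero is a sensible piece of bookkeeping that the paper leaves implicit in calling $S_n$ a degree $n$ polynomial.
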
 

\begin{proof} 
Recall that a nonconstant elliptic function has poles. Let the solution $f$ have a pole of order $m$ at some point. At this point, $f'$ has a pole of order $m + 1$ and $S_n(f)$ has a pole of order $n m$. The differential equation of which $f$ is a solution then forces $2(m + 1) = m + n m$ so that $m (n - 1) = 2$. The only solutions to this equation are $n = 2, m = 2$ and $n = 3, m = 1$. 
\end{proof} 

\medbreak 

In particular, if $n > 3$ then $\nabla$ is not the restriction of an elliptic function; for brevity, we shall simply say `$\nabla$ is not elliptic'. 

\medbreak 

We are now able to address the question whether the analogue $d$ of the Jacobian function ${\rm dn}$ is elliptic. In fact, we can also answer the same question for its square 
$$D = d^2.$$ 

\medbreak 

\begin{theorem} \label{D}
If $n > 3$ then the square $D = d^2$ is not elliptic. 
\end{theorem}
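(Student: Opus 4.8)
The plan is to show that if $D$ extended to an elliptic function, then so would the square $\nabla = \partial^2$ of the auxiliary function, contradicting Theorem \ref{n3} (applicable since $n > 3$). The link between the two is a \emph{rational} expression for $\nabla$ in terms of $D$ and $D\,'$, extracted from the differential equations already in hand.

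First I would substitute $D = d^2$ into the first differential equation of Theorem \ref{d and}. Writing $D\,' = 2 d\, d\,'$, so that $(D\,')^2 = 4 D (d\,')^2$, the relation $\partial^2 (d\,')^2 = (1 - d^2)(d^2 - \lambda^2)$ becomes the identity
$$\partial^2 (D\,')^2 = 4 D (1 - D)(D - \lambda^2).$$
Near $0$ the function $D = \cos^2 \psi$ is nonconstant — indeed $\psi\,'(0) = \kappa \neq 0$, so $D = 1 - \kappa^2 t^2 + \cdots$ — hence $D\,'$ is not identically zero, and wherever it is nonzero
$$\nabla = \partial^2 = \frac{4 D (1 - D)(D - \lambda^2)}{(D\,')^2}.$$

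Now suppose, for a contradiction, that $D$ is elliptic. Then $D\,'$ is elliptic over the same period lattice and is not identically zero, so the right-hand side above is an elliptic function $g$. Since $g$ agrees with $\nabla$ on a punctured neighbourhood of $0$ and $\nabla$ is analytic at $0$, the singularity of $g$ at $0$ is removable and $g = \nabla$ near $0$; consequently $g$ satisfies, near $0$, the differential equation of Theorem \ref{nabla}, and therefore — both sides being meromorphic on $\mathbb{C}$ — on all of $\mathbb{C}$. Thus $g$ is an elliptic solution of the equation in Theorem \ref{n3}, which, as $n > 3$, forces $g$ to be constant. But then $\nabla = g$ would be constant near $0$, contradicting $\nabla = \cos^2 \tfrac{1}{n}\psi = 1 - \kappa^2 t^2/n^2 + \cdots$. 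Hence $D$ is not elliptic.

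The delicate step — and the one I expect to be the main obstacle — is the implication ``$D$ elliptic $\Rightarrow \nabla$ elliptic'': it works only because $\nabla$ can be written as a rational function of $D$ and $D\,'$, which is why the first move is to eliminate $\partial$ from Theorem \ref{d and} rather than to invoke the degree-$n$ relation $D = S_n(\nabla)$, from which $\nabla$ cannot be recovered. Once the rational expression is available, the rest is just assembling Theorems \ref{nabla} and \ref{n3}.
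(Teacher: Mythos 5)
Your proposal is correct and follows essentially the same route as the paper: eliminate $\partial$ from the first equation of Theorem \ref{d and} to get $\nabla = 4 D (1 - D)(D - \lambda^2)/(D\,')^2$, so ellipticity of $D$ would force ellipticity of $\nabla$, contradicting Theorem \ref{n3}. You merely spell out the details the paper leaves implicit (nonvanishing of $D\,'$, removability at $0$, and nonconstancy of $\nabla$), which is fine.
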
 

\begin{proof} 
Strictly speaking, we mean here that $D$ is not the restriction of an elliptic function. From $D\,' = 2 \, d \, d\,'$ and the first equation in Theorem \ref{d and} we deduce that 
$$\nabla \, (D\,')^2 = 4 D (1 - D)(D - \lambda^2)$$
or 
$$\nabla = 4 D (1 - D)(D - \lambda^2) / (D\,')^2$$ 
whence ellipticity of $D$ would force ellipticity upon $\nabla$. 
\end{proof} 

\medbreak 

There are parallel consequences for the squares $C = c^2$ and $S = s^2$ of the other Jacobian analogues: it is readily verified that these squares similarly satisfy 
$$\nabla \, (C\,')^2 = 4 C \, (1 - C) \, (\lambda^2 + \kappa^2 C)$$
and 
$$\nabla \, (S\,')^2 = 4 S \, (1 - S) \, (1 - \kappa^2 S);$$
alternatively and more simply, they satisfy 
$$C + S = 1 \; \; {\rm and} \; \; D + \kappa^2 S = 1.$$
Thus, if $n > 3$ then none of the three functions $S, C, D$ is elliptic, whence none of $s, c, d$ is elliptic; similar remarks apply to the quotient $s/c$ in view of the identity $1 + (s/c)^2 = 1/c^2$. 

\medbreak 

The cases $n = 2$ and $n = 3$ not addressed in Theorem \ref{n3} and Theorem \ref{D} are true exceptions: they correspond to signature four [7] and signature three [5] respectively; in brief, the results are as follows. 

\medbreak 

{\bf Case $ n = 2$}. Here, $d = 2 \partial^2 - 1$ so that the first equation of Theorem \ref{d and} simplifies to 
$$(d\,')^2 = 2 (1 - d) (d^2 - \lambda^2).$$
This differential equation reveals that $d$ is elliptic: indeed
$$d = 1 - \tfrac{1}{2} \kappa^2 / (\wp + \tfrac{1}{3})$$
where $\wp$ is the Weierstrass function with invariants $g_2 = \frac{4}{3} - \kappa^2$ and $g_3 = \frac{8}{27} - \frac{1}{3} \kappa^2$. Note that $\wp$ has $-1/3$ and $\tfrac{1}{6} \pm \frac{1}{2} \lambda$ as its midpoint values; in particular, $\wp + \frac{1}{3}$ has a meromorphic (indeed, an elliptic) square-root. As $d$ is elliptic, $\nabla = \partial^2 = (1 + d) / 2$ is also elliptic; the auxiliary function $\partial$ itself does not extend meromorphically to the plane, on account of the discussion after Theorem \ref{nabla}. As $d^2$ is elliptic, so are $s^2$ and $c^2$ in view of $d^2 + \kappa^2 s^2 = 1$ and $c^2 + s^2 = 1$. By direct calculation, $\kappa^2 s^2 = 4 \nabla \, (1 - \nabla)$; the zeros of $\nabla$ being simple, we deduce that $s^2$ does not have a meromorphic square-root and $s$ is not elliptic. Also by direct calculation, $c^2 = \frac{1}{4} (\wp\,')^2 / (\wp + \frac{1}{3})^3$ so $c$ is elliptic. For further details, see [7] and [3]. 

\medbreak 

{\bf Case $n = 3$}. Here, $d = 4 \partial^3 - 3 \partial$ so that $d^2 = \nabla (4 \nabla - 3)^2$ and the equation of Theorem \ref{nabla} reads  
$$9 (\nabla\,')^2 = 4 (1 - \nabla) (\nabla (4 \nabla - 3)^2 - \lambda^2).$$
The cubic factor on the right-hand side has discriminant $2^8 3^3 \lambda^2 (1 - \lambda^2) \neq 0$ and therefore has distinct roots, none of which is unity. Consequently, the quartic on the right-hand side has distinct roots and the differential equation reveals that $\nabla$ is elliptic, with simple zeros (as above) and simple poles (by a polar analysis akin to that conducted in Theorem \ref{n3}), so the auxiliary function $\partial$ does not extend meromorphically to the plane. The square $d^2 = (4 \nabla - 3)^2 \, \nabla$ is elliptic with triple poles, whence so are the squares $c^2$ and $s^2$; it follows that $d, \, c$ and $s$ are not elliptic. For further details, see [4] and [2]. 

\medbreak 

\section*{The odd case}

\medbreak 

Throughout this section, we consider the case in which $a$ is the reciprocal of an odd positive integer: say $1/a = n = 2 m + 1$. Some of the supporting arguments in this section are so closely similar to those in the preceding section that we abbreviate them or omit them entirely; others are sufficiently different that we present them in full. 

\medbreak 

\begin{theorem} \label{Tn odd}
The functions $d = \cos \psi$ and $\partial = \cos \frac{2}{n} \psi$ satisfy $2 \, d^2 - 1 = T_n \circ \partial.$ 
\end{theorem}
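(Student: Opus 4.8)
The plan is to imitate the proof of Theorem \ref{Tn} verbatim, exploiting the fact that $T_n$ is \emph{defined} by $T_n(\cos\theta)=\cos n\theta$. First I would record the bookkeeping: in the odd case $1/a=n$, so $2a\psi=\tfrac{2}{n}\psi$ and hence the auxiliary function is $\partial=\cos\tfrac{2}{n}\psi$, while $d=\cos\psi$ as always.

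Next I would apply the Chebyshev defining relation to the argument $\theta=\tfrac{2}{n}\psi$, obtaining
$$T_n(\partial)=T_n\bigl(\cos\tfrac{2}{n}\psi\bigr)=\cos\Bigl(n\cdot\tfrac{2}{n}\psi\Bigr)=\cos 2\psi.$$
Finally the double-angle identity $\cos 2\psi=2\cos^2\psi-1=2d^2-1$ closes the argument, giving $2d^2-1=T_n\circ\partial$.

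The only point that deserves a word of care — and it is the same point already flagged in Theorem \ref{Tn} as an ``abuse of composition'' — is that the relation $T_n(\cos\theta)=\cos n\theta$, though classically phrased for real $\theta$, holds for all complex $\theta$: both sides are entire and they agree on an interval of the real line, so the identity theorem forces agreement everywhere. In particular it is legitimate to substitute $\theta=\tfrac{2}{n}\psi$ throughout the neighbourhood of $0$ on which $\psi$ (and hence $d$ and $\partial$) is defined and analytic. Beyond this remark there is no obstacle; the computation is immediate once the defining property of $T_n$ is invoked.
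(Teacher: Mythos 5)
Your argument is exactly the paper's: both sides are identified with $\cos 2\psi$, via the Chebyshev relation $T_n(\cos\tfrac{2}{n}\psi)=\cos 2\psi$ on one side and the double-angle identity $2\cos^2\psi-1=\cos 2\psi$ on the other. The additional remark on extending $T_n(\cos\theta)=\cos n\theta$ to complex $\theta$ by the identity theorem is a harmless elaboration of what the paper leaves implicit.
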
 

\begin{proof} 
Each side of the claimed equation is an alternative expression for $\cos 2 \psi$. 
\end{proof} 

\medbreak 

Consequently, the first-order differential equation satisfied by $\partial$ alone now assumes a slightly different form. 

\medbreak 

\begin{theorem} \label{partial odd}
When $1/a = n$ is odd, the auxiliary function $\partial$ satisfies the differential equation 
$$\partial^2 \, (\partial\,')^2 = \frac{2}{n^2} (1 - \partial^2)(T_n (\partial) + 1 - 2 \lambda^2).$$ 
\end{theorem}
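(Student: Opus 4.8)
The plan is to start from the second differential equation in Theorem \ref{d and}, namely $\partial^2 (\partial\,')^2 = 4 a^2 (1 - \partial^2)(d^2 - \lambda^2)$, and simply eliminate $d$ in favour of $\partial$ using Theorem \ref{Tn odd}. First I would substitute $a = 1/n$ so that the prefactor $4a^2$ becomes $4/n^2$. Then, since Theorem \ref{Tn odd} gives $2d^2 - 1 = T_n(\partial)$, I would solve for $d^2$ to get $d^2 = \tfrac{1}{2}(T_n(\partial) + 1)$, and hence
$$d^2 - \lambda^2 = \tfrac{1}{2}(T_n(\partial) + 1) - \lambda^2 = \tfrac{1}{2}\big(T_n(\partial) + 1 - 2\lambda^2\big).$$
Inserting this into the differential equation turns the factor $4/n^2$ into $2/n^2$ and leaves exactly the claimed identity.

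The key steps, in order, are: (i) invoke the second equation of Theorem \ref{d and}; (ii) specialize $a = 1/n$; (iii) apply Theorem \ref{Tn odd} to rewrite $d^2$ as $\tfrac{1}{2}(T_n(\partial)+1)$; (iv) collect the constants, noting that $4a^2 \cdot \tfrac{1}{2} = 2/n^2$, to arrive at $\partial^2(\partial\,')^2 = \tfrac{2}{n^2}(1-\partial^2)(T_n(\partial) + 1 - 2\lambda^2)$. This is entirely parallel to the proof of Theorem \ref{partial} in the even case, where Theorem \ref{Tn} was substituted instead.

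There is essentially no obstacle here: the result is a direct algebraic substitution, and the only thing to be careful about is tracking the factor of $\tfrac{1}{2}$ that arises from $d^2 = \tfrac{1}{2}(T_n(\partial)+1)$, which is precisely what changes the constant from $4/n^2$ in Theorem \ref{partial} to $2/n^2$ here. Accordingly I expect the proof to be a single line, of the same terse character as the proof of Theorem \ref{partial}: "Insert Theorem \ref{Tn odd} in the second equation of Theorem \ref{d and} when $a = 1/n$."
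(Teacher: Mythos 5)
Your proof is correct and is essentially identical to the paper's: the paper likewise takes the second equation of Theorem \ref{d and} with $a = 1/n$ and replaces $d^2$ by $\tfrac{1}{2}(T_n(\partial)+1)$ from Theorem \ref{Tn odd}, the factor $\tfrac{1}{2}$ converting $4/n^2$ into $2/n^2$ exactly as you describe.
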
 

\begin{proof} 
Refer to the second equation of Theorem \ref{d and} and replace $d^2$ by $\frac{1}{2} (T_n (\partial) + 1).$ 
\end{proof} 

\medbreak 

Rather than pause to address the ellipticity (or indeed otherwise) of $\partial$ itself, we pass directly to the question whether or not its square $\nabla$ is elliptic. 

\medbreak 

\begin{theorem} \label{nabla odd}
When $1/a = n = 2 m + 1$, the square $\nabla = \partial^2$ satisfies the differential equation
$$\Big[ \frac{n^2}{8} (\nabla\,')^2 + \Lambda (\nabla - 1) \Big]^2 = \nabla \, (\nabla - 1)^2 \, V_m(2 \nabla - 1)^2$$
where 
$$\Lambda = 1 - 2 \lambda^2.$$ 
\end{theorem}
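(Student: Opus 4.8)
The plan is to repeat, in the odd case, the passage from the $\partial$-equation to the $\nabla$-equation that produced Theorem \ref{nabla} in the even case, the only new ingredient being the need to square at the end. First I would differentiate $\nabla = \partial^2$ to get $\nabla\,' = 2\,\partial\,\partial\,'$, hence $(\nabla\,')^2 = 4\,\partial^2(\partial\,')^2$. The right-hand side is exactly four times the left-hand side of Theorem \ref{partial odd}, so
$$(\nabla\,')^2 = \frac{8}{n^2}\,(1 - \nabla)\,\big(T_n(\partial) + \Lambda\big),$$
using $1 - 2\lambda^2 = \Lambda$. Rearranging this to isolate the Chebyshev term gives
$$\frac{n^2}{8}(\nabla\,')^2 + \Lambda(\nabla - 1) = (1 - \nabla)\,T_n(\partial),$$
since $-(1-\nabla)\Lambda = \Lambda(\nabla - 1)$.

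Next I would square this last identity in order to remove the residual dependence on $\partial$ itself. The right-hand side becomes $(1-\nabla)^2\,T_n(\partial)^2 = (\nabla - 1)^2\,T_n(\partial)^2$, and at this point the relevant input is the factorization of the odd Chebyshev polynomial already recorded in the discussion following Theorem \ref{nabla}: for $n = 2m+1$ one has $T_n(\partial) = \partial\,V_m(2\partial^2 - 1)$ with $V_m$ Chebyshev of the third kind (formula (1.15) of [1]). Therefore $T_n(\partial)^2 = \partial^2\,V_m(2\partial^2 - 1)^2 = \nabla\,V_m(2\nabla - 1)^2$, and substituting yields
$$\Big[\frac{n^2}{8}(\nabla\,')^2 + \Lambda(\nabla - 1)\Big]^2 = \nabla\,(\nabla - 1)^2\,V_m(2\nabla - 1)^2,$$
which is the claimed equation.

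There is no genuine obstacle here; the points that need attention are bookkeeping ones. One should check explicitly that the factor $\tfrac{4}{n^2}\cdot\tfrac12\big(T_n(\partial) + 1 - 2\lambda^2\big)$ arising from the $d^2$-substitution collapses to $\tfrac{2}{n^2}\big(T_n(\partial) + \Lambda\big)$, and that the subsequent sign manipulation indeed produces the bracket $\tfrac{n^2}{8}(\nabla\,')^2 + \Lambda(\nabla - 1)$ written in the statement rather than its negative (which, of course, would make no difference after squaring). The only conceptual remark worth making is that the squaring step is forced rather than wasteful: $T_n(\partial)$ is an odd polynomial in $\partial$, so it is not a rational function of $\nabla = \partial^2$, whereas its square is; this is precisely why the differential equation governing $\nabla$ in the odd case is quadratic in $(\nabla\,')^2$, in contrast with the equation of Theorem \ref{nabla} in the even case.
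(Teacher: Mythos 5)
Your proposal is correct and follows essentially the same route as the paper's own proof: substitute $\nabla\,' = 2\,\partial\,\partial\,'$ into Theorem \ref{partial odd}, isolate $(1-\nabla)\,T_n(\partial)$, square, and invoke $T_n(\partial)^2 = \nabla\,V_m(2\nabla-1)^2$. Your closing remark that squaring is forced because $T_n(\partial)$ is odd in $\partial$ is a sound and welcome clarification, but it does not alter the argument.
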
 

\begin{proof} 
Substitute $\nabla\,' = 2 \partial \, \partial'$ in the differential equation of Theorem \ref{partial odd}: there follows 
$$(\nabla\,')^2 = \frac{8}{n^2} (1 - \nabla) (T_n (\partial) + \Lambda)$$
so that 
$$\Big[ \frac{n^2}{8} (\nabla\,')^2 + \Lambda (\nabla - 1) \Big]^2 = (1 - \nabla)^2 T_n(\partial)^2.$$
Finally, recall (from the argument leading to Theorem \ref{nabla}) that $T_n(\partial)^2 = \nabla \, V_m (2 \nabla - 1)^2.$ 

\end{proof} 

\medbreak 

In the `even' case, we ruled out ellipticity of $\nabla$ by an inspection of poles. In the `odd' case, we are able to rule out ellipticity of $\nabla$ by an inspection of zeros.

\medbreak 

\begin{theorem} \label{oddno0}
If $n = 2 m + 1$ then a solution of the differential equation 
$$\Big[ \frac{n^2}{8} (f\,')^2 + \Lambda (f - 1) \Big]^2 = f \, (f - 1)^2 \, V_m(2 f - 1)^2$$
is elliptic only when constant. 
\end{theorem}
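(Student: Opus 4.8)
The plan is to run the polar analysis of Theorem~\ref{n3} ``in reverse,'' probing a \emph{zero} of the solution rather than a pole, exactly as the remark preceding this statement advertises. Suppose $f$ is an elliptic solution that is not constant. Then $1/f$ cannot be holomorphic on the whole plane (otherwise Liouville would force $f$ to be constant), so $f$ has at least one zero; fix one, at $z_0$ say, of order $k \ge 1$, and compare the orders of vanishing of the two sides of the differential equation there.

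First I would pin down the right-hand side. The polynomials $(f-1)^2$ and $V_m(2f-1)^2$ are perfect squares taking the nonzero values $1$ and $V_m(-1)^2$ at $z_0$; here $V_m(-1) = (-1)^m n \ne 0$, which drops out of differentiating the identity $T_n(\partial) = \partial\, V_m(2\partial^2 - 1)$ at $\partial = 0$ and using $T_n'(0) = n \sin\tfrac{n\pi}{2}$. Hence $f(f-1)^2 V_m(2f-1)^2$ vanishes to order exactly $k$ at $z_0$. The left-hand side is displayed as a square, so its order at $z_0$ is $2\,\mathrm{ord}_{z_0}(B)$, where $B = \tfrac{n^2}{8}(f\,')^2 + \Lambda(f-1)$ (and $B \not\equiv 0$, since $f$ is nonconstant). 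Equating the two orders forces $k$ to be even, hence $k \ge 2$, hence $\mathrm{ord}_{z_0}(B) = k/2 \ge 1$, so $B(z_0) = 0$. But $k \ge 2$ also makes $f\,'(z_0) = 0$, so $B(z_0) = \Lambda(0 - 1) = -\Lambda$; therefore $\Lambda = 0$.

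It remains only to dispose of the degenerate possibility $\Lambda = 0$ (that is, $\kappa^2 = \lambda^2 = \tfrac12$). When $\Lambda = 0$ the differential equation collapses to $\tfrac{n^4}{64}(f\,')^4 = f(f-1)^2 V_m(2f-1)^2$, whose two sides vanish to orders $4(k-1)$ and $k$ at $z_0$; the equation $4(k-1) = k$ has no positive integer solution, and this contradiction completes the argument, so every elliptic solution is constant. The only place that needs a moment's care is precisely this bifurcation at $\Lambda = 0$, where the bracket $B$ loses its nonzero constant term and the contradiction has to be read off the fourth-power form of the equation instead; everything else is routine bookkeeping with orders of vanishing, together with the fact $V_m(-1) \ne 0$.
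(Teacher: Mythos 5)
Your proof is correct, but it reaches the contradiction by a different mechanism than the paper. The paper differentiates the differential equation, cancels the (not identically zero) factor $f\,'$, and evaluates the resulting identity at a zero of $f$: the left side vanishes there because the original equation forces the bracket $\frac{n^2}{8}(f\,')^2 + \Lambda(f-1)$ to vanish wherever $f$ does, while the right side equals $q\,'(0) = V_m(-1)^2 = (2m+1)^2 \neq 0$ --- a one-line contradiction that is uniform in $\Lambda$. You instead compare orders of vanishing at a zero of $f$: the left side, being the square of a meromorphic function that is holomorphic and not identically zero near the zero (which does merit the brief justification that $B \equiv 0$ would force $q(f) \equiv 0$ and hence $f$ constant), vanishes to even order, while the right side vanishes to order exactly $k = \mathrm{ord}_{z_0}(f)$ since $0$ is a simple root of $q$; the parity then forces $k \geq 2$, hence $f\,'(z_0)=0$ and $B(z_0) = -\Lambda = 0$, and the degenerate case $\Lambda = 0$ is killed by a second order count ($4(k-1) = k$ has no positive integer solution). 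Both arguments ultimately rest on the same structural facts --- $q(0)=0$ and $q\,'(0) = V_m(-1)^2 = n^2 \neq 0$, and your computation $V_m(-1) = (-1)^m n$ via $T_n'(0)$ is right --- but your version makes the parity obstruction explicit and is the natural mirror, at zeros, of the polar analysis in Theorem~\ref{n3}, at the modest cost of the bifurcation at $\Lambda = 0$ (which is a genuine case, occurring at $\kappa^2 = \tfrac12$, so you were right not to wave it away). The paper's differentiation trick avoids that case split entirely.
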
 

\begin{proof} 
For convenience, introduce the polynomial 
$$q(z) = z \, (z - 1)^2 \, V_m (2 z - 1)^2$$ 
so that $q(0) = 0$ and $q\,'(0) = V_m (-1)^2 = (2 m + 1)^2.$ 

For a contradiction, let $f$ be a nonconstant elliptic solution to the displayed differential equation. Differentiate throughout to obtain  
$$2 \, \Big[ \frac{n^2}{8} (f\,')^2 + \Lambda (f - 1) \Big] \Big[ \frac{n^2}{4} f\,' \, f '' + \Lambda f\,' \Big] = q\,' (f) \, f\,';$$
as the elliptic $f\,'$ is not identically zero, it may be cancelled to yield 
$$2 \, \Big[ \frac{n^2}{8} (f\,')^2 + \Lambda (f - 1) \Big] \Big[ \frac{n^2}{4} \, f '' + \Lambda \Big] = q\,' (f).$$
Now evaluate both sides of this equation at a zero of the (nonconstant elliptic) function $f$: the left side reduces to $0$ by virtue of the original differential equation; the right side reduces to $(2 m + 1)^2$. Contradiction. 
\end{proof} 

\medbreak 

We leave as an exercise a polar analysis along the lines of the one in Theorem \ref{n3}. 

\medbreak 

Theorem \ref{nabla odd} and Theorem \ref{oddno0} tell us that $\nabla$ is not elliptic. As in the `even' case, we may now infer that the squares $D, C$ and $S$ are not elliptic. 

\medbreak 

\begin{theorem} \label{DCSodd}
If $n = 2 m + 1$ then none of the squares $d^2, \, c^2$ and $s^2$ is elliptic. 
\end{theorem}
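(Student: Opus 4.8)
The plan is to recycle the argument from the even case almost verbatim, transferring non-ellipticity from $\nabla$ to $D = d^2$ and thence to $C = c^2$ and $S = s^2$. First I would record that the first differential equation of Theorem \ref{d and}, namely $\partial^2 (d\,')^2 = (1 - d^2)(d^2 - \lambda^2)$, makes no reference to the parity of $1/a$ and so remains available here; multiplying through by $4 d^2$ and using $D\,' = 2 d \, d\,'$ gives $\nabla \, (D\,')^2 = 4 D (1 - D)(D - \lambda^2)$, exactly as in the proof of Theorem \ref{D}. Since $d = \cos \psi$ is nonconstant, $D\,'$ is not identically zero, so we may write $\nabla = 4 D (1 - D)(D - \lambda^2)/(D\,')^2$, which exhibits $\nabla$ as a rational function of $D$ and $D\,'$. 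Hence ellipticity of $D$ would force ellipticity of $\nabla$.

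Next I would invoke Theorem \ref{nabla odd} together with Theorem \ref{oddno0}: the former shows that $\nabla$ satisfies the differential equation appearing in the latter, and the latter shows that every elliptic solution of that equation is constant. But $\nabla = \cos^2 \tfrac{2}{n} \psi$ is visibly nonconstant, so $\nabla$ is not elliptic. Combining this with the previous paragraph, $D$ is not elliptic.

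Finally, to pass from $D$ to $C$ and $S$, I would use the quadratic relations already recorded for the Jacobian analogues: $c^2 + s^2 = 1$ and $d^2 + \kappa^2 s^2 = 1$ translate into $C + S = 1$ and $D + \kappa^2 S = 1$. These are affine relations among $D, C, S$ (with $\kappa \neq 0$), so ellipticity of any one of the three would entail ellipticity of all three, in particular of $D$ --- a contradiction. Therefore none of $D, C, S$ is elliptic; and then, as in the even case, none of $s, c, d$ is elliptic either.

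I do not anticipate a genuine obstacle here. The only points requiring a moment's care are to confirm that the implication runs in the direction needed (ellipticity of $D$ forcing ellipticity of $\nabla$, rather than the reverse) and that $D\,'$ is not identically zero so that the division producing the rational expression for $\nabla$ is legitimate. Everything else is a transcription of the reasoning already carried out in the even case.
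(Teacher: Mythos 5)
Your argument is correct and is essentially the paper's own proof: the paper likewise deduces non-ellipticity of $D$ from the identity $\nabla = 4 D (1 - D)(D - \lambda^2)/(D\,')^2$ together with Theorems \ref{nabla odd} and \ref{oddno0}, and passes to $C$ and $S$ via the relations $C + S = 1$ and $D + \kappa^2 S = 1$. The extra care you take over the direction of the implication and the non-vanishing of $D\,'$ is sound but left implicit in the paper.
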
 

\begin{proof} 
If $D$ were elliptic then so would be 
$$\nabla = 4 D (1 - D)(D - \lambda^2) / (D\,')^2.$$ 
 
\end{proof} 

\medbreak 

When $n = 3$ we recover the results of [4]. 

\medbreak 

\section*{Remarks}

\medbreak 

Throughout this paper, we have followed the program initiated by Shen, starting from the expression 
$${\rm dn} \, u = \sqrt{1 - \kappa^2 \sin^2 \phi}$$ 
\medbreak 
\noindent 
for the third of the classical Jacobian elliptic functions, as proposed in the Introduction. This third Jacobian function has been called the `delta amplitude' on account of the equivalent expression 
$${\rm dn} \, u = \frac{{\rm d} \phi}{{\rm d} u}.$$ 
\medbreak 
\noindent 
This derivative is the form on which Shen models his `third function' ${\rm dn}_3$ in [5] (hence the `important modification' that we mentioned in the Introduction). The detailed investigation of this form remains to be pursued beyond the case of signature three; here, we merely start the discussion in general and indicate what happens in signature three and signature four. 

\medbreak 

With the local inverse $\phi$ and the associated function $\psi$ as at the outset of our `Preliminaries', we introduce the generalized `delta amplitude' 
$$\delta: = \phi\,' = \frac{\cos \psi}{\cos 2 a \psi} = \frac{d}{\partial}$$ 
and ask whether this function $\delta$ (or its square) is the restriction of an elliptic function, especially when $1/a$ is a positive integer. 

\medbreak 

The case in which $1/a$ is twice an odd integer features some simplification. Say $1/a = 2n$ where $n = 2 m + 1$. In this case, $d$ and $\partial$ are related by 
$$d = T_n (\partial) = \partial \, V_m (2 \partial^2 - 1)$$ 
so that 
$$\delta = V_m (2 \nabla - 1)$$
where $V_m$ is again the Chebyshev polynomial of the third kind. 

\medbreak 

Two special cases are relatively straightforward to handle. 

\medbreak 

{\bf Signature} 3. Here, $1/a = 6$ and the foregoing simplification applies. We recover the result of Shen that $\delta = {\rm dn}_3$ is elliptic; indeed 
$$\delta = V_1(2 \nabla - 1) = 4 \nabla - 3$$ 
is elliptic with simple poles, the same being true of $\nabla$ as noted at the end of our section on the `even' case. In fact, Shen [5] shows that 
$${\rm dn}_3 = 1 - \tfrac{4}{9} \kappa^2 / (\wp + \tfrac{1}{3})$$ 
where now $\wp$ is the Weierstrass function with $g_2 = 4 (9 - 8 \kappa^2) / 27$ and $g_3 = 8 (8 \kappa^4 - 36 \kappa^2 + 27) / 729$; for an alternative approach, see also [2]. 

\medbreak 

{\bf Signature} 4. Here, $1/a = 4$ and we may draw from the case $n = 2$ near the end of our section on the `even' case: thus 
$$\delta^2 = \frac{d^2}{\partial^2} = \frac{2 d^2}{1 + d} = \frac{(2 \nabla - 1)^2}{\nabla}$$
and so $\delta^2$ is elliptic; as $\nabla$ has simple zeros, it follows that $\delta$ itself is not elliptic.

\bigbreak

\begin{center} 
{\small R}{\footnotesize EFERENCES}
\end{center} 
\medbreak

[1] J.C. Mason and D.C. Handscomb, {\it Chebyshev Polynomials}, Chapman and Hall (2003). 

\medbreak 

[2] P.L. Robinson, {\it Elliptic functions from $F(\frac{1}{3}, \frac{2}{3} ; \frac{1}{2} ; \bullet)$}, arXiv 1907.09938 (2019). 

\medbreak 

[3] P.L. Robinson, {\it Elliptic functions from $F(\tfrac{1}{4}, \tfrac{3}{4}; \tfrac{1}{2} ; \bullet)$}, arXiv 1908.01687 (2019). 

\medbreak 

[4] P.L. Robinson, {\it Nonelliptic functions from $F(\tfrac{1}{6}, \tfrac{5}{6}; \tfrac{1}{2} ; \bullet)$}, arXiv 2004.06529 (2020). 

\medbreak 

[5] Li-Chien Shen, {\it On the theory of elliptic functions based on $_2F_1(\frac{1}{3}, \frac{2}{3} ; \frac{1}{2} ; z)$}, Transactions of the American Mathematical Society {\bf 357}  (2004) 2043-2058. 

\medbreak 

[6] Li-Chien Shen, {\it A note on Ramanujan's identities involving the hypergeometric function $F(\tfrac{1}{6}, \tfrac{5}{6}; 1 ; z)$}, Ramanujan Journal {\bf 30} (2013) 211-222. 

\medbreak 

[7] Li-Chien Shen, {\it On a theory of elliptic functions based on the incomplete integral of the hypergeometric function $_2 F_1 (\frac{1}{4}, \frac{3}{4} ; \frac{1}{2} ; z)$}, Ramanujan Journal {\bf 34} (2014) 209-225. 

\medbreak

\medbreak

\end{document}